\newtheorem{theorem}{Theorem}[section]
\newtheorem{lemma}[theorem]{Lemma}
\newtheorem{proposition}[theorem]{Proposition}
\newtheorem{corollary}[theorem]{Corollary}
\newtheorem*{theorem*}{Theorem}
\theoremstyle{remark}
\newtheorem{remark}[theorem]{Remark}
\newtheorem{definition}[theorem]{Definition}
\numberwithin{equation}{section}
\newcommand{\R}{\mathbb{R}}
\newcommand{\C}{\mathbb{C}}
\newcommand{\Hi}{\mathcal{H}}
\newcommand{\im}{\operatorname{im }}
\newcommand{\gae}{\lower 2pt \hbox{$\, \buildrel {\scriptstyle >}\over {\scriptstyle
\sim}\,$}}
\newcommand{\lae}{\lower 2pt \hbox{$\, \buildrel {\scriptstyle <}\over {\scriptstyle
\sim}\,$}}
\newcommand{\MU}[1]{
\setbox0\hbox{$#1$}
\setbox1\hbox{$W$}
\ifdim\wd0>\wd1 #1^{\sim} \else \widetilde{#1} \fi
}
\begin{document}
\title[Continuity of ring $*$-homomorphisms between $C^*$-algebras]{Continuity of ring $\boldsymbol{*}$-homomorphisms between $\boldsymbol{C^*}$-algebras}

\author{Mark Tomforde 
}

\address{Department of Mathematics \\ University of Houston \\ Houston, TX 77204-3008 \\USA}
\email{tomforde@math.uh.edu}


\date{May 5, 2009}

\subjclass[2000]{46L05, 16W10}

\keywords{$C^*$-algebras, rings, homomorphisms, Jordan morphisms}

\begin{abstract}
The purpose of this note is to prove that if $A$ and $B$ are unital $C^*$-algebras and $\phi : A \to B$ is a unital $*$-preserving ring homomorphism, then $\phi$ is contractive; i.e., $\| \phi (a) \| \leq \| a \|$ for all $a \in A$.   (Note that we do not assume $\phi$ is linear.)  We use this result to deduce a number of corollaries as well as characterize the form of such unital $*$-preserving ring homomorphisms. 
\end{abstract}

\maketitle

\section{Introduction}

In recent years there has been a great deal of interaction between ring theory and the study of $C^*$-algebras in functional analysis.  Consequently, one is often led to ask whether well-known results for $C^*$-algebras are really consequences of their ring structure. 

One of the first facts a student of $C^*$-algebras learns is that a $*$-ho\-mo\-morph\-ism between $C^*$-algebras is contractive, and hence also continuous.  Specifically, if $A$ and $B$ are $C^*$-algebras and $\phi : A \to B$ is a $*$-homomorphism (i.e., $\phi$ is linear, multiplicative, and preserves the $*$-operations) then $\| \phi (a) \| \leq \| a \|$ for all $a \in A$.  The purpose of this note is to show that the same conclusion holds when the map is a $*$-preserving ring homomorphism that is not necessarily linear.  We develop a new approach to prove this fact in Theorem~\ref{main-thm} and deduce a number of corollaries.  Furthermore, we use these results to characterize such homomorphisms in Proposition~\ref{characterize-prop}, where we show each is always a direct sum of a linear $*$-preserving homomorphism and a conjugate-linear $*$-preserving homomorphism.

We hope the results presented here will be of interest to algebraists as well as functional analysts.  Consequently, we will do our best to keep the exposition accessible to readers outside of functional analysis and to provide the necessary background on $C^*$-algebras and their relevant properties.

\section{Background}

Recall that a $C^*$-algebra is defined to be a Banach algebra $A$ over the complex numbers (i.e., $A$ is an associative complex algebra with a submultiplicative norm $\| \cdot \|$ for which $A$ is complete) together with a $*$-operation (i.e., an involution that is conjugate-linear and anti-multiplicative) satisfying the $C^*$-identity: $\| x^*x \| = \| x \|^2$ for all $x \in A$.  If $A$ is a $C^*$-algebra, the Gelfand-Naimark Theorem states that there is a Hilbert space $\Hi$ and an isometric $*$-preserving representation $\pi : A \to \mathcal{B} ( \Hi)$.  Thus $C^*$-algebras give an abstract characterization of closed $*$-subalgebras of bounded operators on Hilbert space. 

One of the fundamental results in the study of $C^*$-algebras is that if $\phi: A \to B$ is a map between $C^*$-algebras that is linear, multiplicative, and preserves the $*$-operations, then $\| \phi (a) \| \leq \| a \|$ for all $a \in A$.  The standard way that this is proven (see \cite[Theorem~2.1.7]{Mur}, for example) is as follows:  One first realizes that by considering unitizations of $C^*$-algebras, it suffices to restrict to the case that $A$ and $B$ are unital $C^*$-algebras and $\phi$ is a unital homomorphism.  Next one shows that if $a \in A$ is selfadjoint (i.e., $a^* = a$), then $\| a \|$ is equal to the spectral radius $$r(a) = \sup \{ | \lambda | : \lambda \in \C \text{ and } a-\lambda 1_A \text{ is not invertible in $A$} \}.$$  Then, using the properties of $\phi$, one can see that $a - \lambda 1_A$ is invertible in $A$ implies $\phi(a - \lambda 1_A)$ is invertible in $B$.  Since $\phi$ is linear and unital, $\phi (a - \lambda 1_A) = \phi(a) - \lambda 1_B$, and we have 
\begin{align}
\{ | \lambda | : \lambda \in \C \text{ and } &\phi(a) -\lambda 1_B \text{ is not invertible in $B$} \} \label{diplayed-eqn} \tag{$\dagger$} \\ 
& \subseteq 
\{ | \lambda |  : \lambda \in \C \text{ and  } a-\lambda 1_A \text{ is not invertible in $A$} \} \notag
\end{align}
so that $r(\phi(a)) \leq r(a)$, and $\| \phi(a) \| \leq \| a \|$ for any selfadjoint $a \in A$.  For a general $a \in A$, we may use the result of the previous sentence and the $C^*$-identity to obtain $\| \phi(a) \|^2 = \| \phi (a)^* \phi(a) \| = \| \phi(a^* a) \| \leq \| a^* a \|  = \| a \|^2$, so that $\| \phi(a) \| \leq \| a \|$ for all $a \in A$.

When $\phi$ is a unital $*$-preserving homomorphism that is not necessarily linear,  the proof above fails because one cannot conclude that $\phi (a - \lambda 1_A)$ is equal to $\phi(a) - \lambda 1_B$, and thus cannot deduce the containment shown in \eqref{diplayed-eqn}.  Nonetheless, we are still able to prove the result by developing a new approach that uses some elementary $C^*$-algebra techniques.

\section{The Main Result}

\begin{definition}
Let $A$ and $B$ be $C^*$-algebras.  A \emph{ring homomorphism} from $A$ to $B$ is a function $\phi : A \to B$ that is additive (i.e., $\phi(a + b) = \phi(a) + \phi(b)$ for $a,b \in A$) and multiplicative (i.e., $\phi(ab) = \phi(a) \phi(b)$ for $a,b \in A$).  A ring homomorphism is \emph{$*$-preserving} if $\phi(a^*) = \phi(a)^*$ for $a \in A$.  
When $A$ and $B$ are unital with units $1_A \in A$ and $1_B \in B$, respectively, we say $\phi$ is \emph{unital} when $\phi(1_A) = 1_B$.
\end{definition}

\begin{remark}
There are examples of unital $*$-preserving ring homomorphisms that are not linear (and hence not $*$-homomorphisms in the category of $C^*$-algebras).  For example, the map $\phi : \C \to \C$ given by complex conjugation, $\phi(z) = \overline{z}$, is a ring homomorphism that is unital and $*$-preserving, but not linear.  
\end{remark}
The proof of our main result will involve a common trick of relating the value of the norm to positivity of certain $2 \times 2$ matrices.  Recall that an element $a$ of a $C^*$-algebra $A$ is \emph{positive} if $a$ is selfadjoint and the spectrum of $a$ is a subset of the non-negative real numbers.  One can show that $a$ is positive if and only if $a = v^*v$ for some $v \in A$ \cite[Theorem~2.2.5]{Mur}.  In addition, if $A$ is represented as bounded operators on a Hilbert space $\Hi$, then an operator $T$ in $A$ is a positive element if and only if $\langle Tx, x \rangle \geq 0$ for all $x \in \Hi$.

\begin{remark}
If $A$ is a $C^*$-algebra, then $M_2(A)$ denotes the collection of $2 \times 2$ matrices with entries in $A$.  If $\phi : A \to B$ is a function we define $\phi_2 : M_2(A) \to M_2(B)$ by $\phi_2 \left( \begin{smallmatrix} a & b \\ c & d \end{smallmatrix} \right) = \left( \begin{smallmatrix} \phi(a) &  \phi(b)  \\  \phi(c)  &  \phi(d)  \end{smallmatrix} \right)$.  If $A$ is a $C^*$-algebra, then $M_2(A)$ is a $C^*$-algebra. If $A$ is represented on Hilbert space via a representation $\pi : A \to B (\Hi)$, then $M_2(A)$ is represented via $\pi_2 : M_2(A) \to M_2(B(\Hi)) \cong B(\Hi \oplus \Hi)$.  Recall that the Hilbert space direct sum $\Hi \oplus \Hi$ has inner product $\left\langle  \left( \begin{smallmatrix} x \\ y \end{smallmatrix} \right),  \left( \begin{smallmatrix} z \\ w \end{smallmatrix} \right) \right\rangle = \langle x, z \rangle + \langle y, w \rangle$.  Furthermore, note that if $\phi : A \to B$ is a unital $*$-preserving ring homomorphism, then $\phi_2 : M_2(A) \to M_2(B)$ is a unital $*$-preserving ring homomorphism.  These notions and the following lemma are found in \cite[Ch.~1--3]{Pau} and \cite[Lemma~3.1]{Pau}.
\end{remark}

\begin{lemma} \label{norm-pos-lem}
Let $A$ be a unital $C^*$-algebra and suppose that $a \in A$.  Then $\| a \| \leq r$ if and only if $$ \begin{pmatrix} r 1_A & a \\ a^* & r 1_A \end{pmatrix} \geq 0$$ in $M_2(A)$.
\end{lemma}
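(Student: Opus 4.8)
The plan is to pass to a faithful Hilbert space representation and convert positivity of the $2\times 2$ matrix into a scalar inequality on vectors of $\Hi\oplus\Hi$, from which the norm estimate falls out. First I would clear away the degenerate parameter values: if $r<0$ then $\|a\|\le r$ is impossible since norms are nonnegative, and the matrix cannot be positive because its $(1,1)$-entry $r1_A$ is not a positive element (equivalently, its compression to the first coordinate of a representation is $r1_A$), so the equivalence holds vacuously; and the case $r=0$ follows from the case $r>0$ by letting $\varepsilon\downarrow 0$ in $\left(\begin{smallmatrix} \varepsilon 1_A & a\\ a^* & \varepsilon 1_A\end{smallmatrix}\right)\ge 0$. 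Dividing through by $r$, it therefore suffices to prove the equivalence when $r=1$.

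By the Gelfand--Naimark Theorem I would fix a faithful $*$-representation $\pi\colon A\to B(\Hi)$, so that $M_2(A)$ is faithfully represented on $\Hi\oplus\Hi$ via $\pi_2$; identifying elements with their images, the matrix $T=\left(\begin{smallmatrix} 1_A & a\\ a^* & 1_A\end{smallmatrix}\right)$ is selfadjoint, hence, by the positivity criterion recalled before the lemma, $T\ge 0$ if and only if $\langle T\left(\begin{smallmatrix} x\\ y\end{smallmatrix}\right),\left(\begin{smallmatrix} x\\ y\end{smallmatrix}\right)\rangle\ge 0$ for all $x,y\in\Hi$. A short computation with the inner product on $\Hi\oplus\Hi$, using $\langle a^*x,y\rangle=\langle x,ay\rangle$, gives
\[
\left\langle \begin{pmatrix} 1_A & a\\ a^* & 1_A\end{pmatrix}\begin{pmatrix} x\\ y\end{pmatrix},\begin{pmatrix} x\\ y\end{pmatrix}\right\rangle \;=\; \|x\|^2+\|y\|^2+2\operatorname{Re}\langle ay,x\rangle ,
\]
so the lemma reduces to the claim that $\|a\|\le 1$ if and only if $\|x\|^2+\|y\|^2+2\operatorname{Re}\langle ay,x\rangle\ge 0$ for all $x,y\in\Hi$.

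For the forward implication I would estimate $2|\operatorname{Re}\langle ay,x\rangle|\le 2\|ay\|\,\|x\|\le 2\|a\|\,\|y\|\,\|x\|\le 2\|x\|\,\|y\|\le\|x\|^2+\|y\|^2$, using Cauchy--Schwarz, the hypothesis $\|a\|\le 1$, and $2st\le s^2+t^2$. For the converse, given $y\in\Hi$ I would substitute the test vector $x=-ay$ into the inequality, which collapses to $\|ay\|^2+\|y\|^2-2\|ay\|^2\ge 0$, i.e.\ $\|ay\|\le\|y\|$; taking a supremum over $\|y\|\le 1$ yields $\|a\|\le 1$. I expect the one genuine decision to be the choice of the test vector $x=-ay$ in the converse, which is precisely what turns the quadratic expression into an operator-norm bound; the remaining steps are routine. (If one prefers to avoid representations: for ``$\Leftarrow$'', conjugating $\left(\begin{smallmatrix} 1_A & a\\ a^* & 1_A\end{smallmatrix}\right)\ge 0$ by the selfadjoint unitary $\left(\begin{smallmatrix} 1_A & 0\\ 0 & -1_A\end{smallmatrix}\right)$ also gives $\left(\begin{smallmatrix} 1_A & -a\\ -a^* & 1_A\end{smallmatrix}\right)\ge 0$, so $-I_2\le\left(\begin{smallmatrix} 0 & a\\ a^* & 0\end{smallmatrix}\right)\le I_2$ where $I_2$ is the unit of $M_2(A)$; this selfadjoint element therefore has norm $\le 1$, and since its square is $\left(\begin{smallmatrix} aa^* & 0\\ 0 & a^*a\end{smallmatrix}\right)$ of norm $\|a\|^2$, we conclude $\|a\|\le 1$. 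For ``$\Rightarrow$'', $\|a\|\le 1$ forces $a^*a\le 1_A$, and then $\left(\begin{smallmatrix} 1_A & a\\ a^* & 1_A\end{smallmatrix}\right)=\left(\begin{smallmatrix} 1_A & 0\\ a^* & 1_A\end{smallmatrix}\right)\left(\begin{smallmatrix} 1_A & 0\\ 0 & 1_A-a^*a\end{smallmatrix}\right)\left(\begin{smallmatrix} 1_A & a\\ 0 & 1_A\end{smallmatrix}\right)$ exhibits the matrix in the form $v^*dv$ with $d\ge 0$.)
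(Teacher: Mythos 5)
Your proof is correct and follows essentially the same route as the paper: represent $A$ on Hilbert space and test positivity of the $2\times 2$ matrix against vectors $\left(\begin{smallmatrix} x \\ y \end{smallmatrix}\right) \in \Hi\oplus\Hi$, with the forward direction being the identical Cauchy--Schwarz estimate. Your converse (substituting the test vector $x=-ay$) and your parenthetical representation-free factorization are clean variants, and your insistence on a \emph{faithful} representation is actually a point of care the paper's proof glosses over, but the underlying approach is the same.
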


\begin{proof}
We represent $A$ on Hilbert space $\pi : A \to B(\Hi)$ and set $T := \pi(a)$.  If $\| T \| \leq r$, then for any vectors $x, y \in \Hi$ we have 
\begin{align*}
\left\langle \begin{pmatrix} r I & T \\ T^* & r I \end{pmatrix}  \begin{pmatrix} x \\ y \end{pmatrix},  \begin{pmatrix} x \\ y \end{pmatrix} \right\rangle &= r \langle x, x \rangle + \langle Ty, x \rangle + \langle T^* x, y \rangle + r \langle y, y \rangle \\
&\geq r \| x \|^2 - 2 \| T \| \| x \| \| y \| + r \| y \|^2 \geq 0.
\end{align*}
Conversely, if $\| T \| > r$, then there exist unit vectors $x, y \in \Hi$ such that $\langle Tx, y \rangle < -r$ and the above inner product is negative.
\end{proof}

\begin{lemma} \label{Q-lin-lem}
If $\phi : A \to B$ is a unital ring homomorphism between unital $C^*$-algebras, then $\phi (k a) = k \phi(a)$ for all $k \in \mathbb{Q}$.
\end{lemma}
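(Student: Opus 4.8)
The plan is to bootstrap from additivity of $\phi$ to rational scalars in the usual way. First I would handle the integers: by induction on $n$, using $\phi(a+b) = \phi(a) + \phi(b)$, one obtains $\phi(na) = n\phi(a)$ for every positive integer $n$. Applying additivity with $a = b = 0$ gives $\phi(0) = 0$, and then applying it with $b = -a$ gives $\phi(a) + \phi(-a) = \phi(0) = 0$, so $\phi(-a) = -\phi(a)$. Combining these cases shows $\phi(na) = n\phi(a)$ for all $n \in \Z$.

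Next I would pass to the rationals. Write $k = p/q$ with $p \in \Z$ and $q$ a positive integer, and set $x := \phi\big((p/q)a\big)$. Applying the integer case just established, $qx = \phi\big(q \cdot (p/q) a\big) = \phi(pa) = p\,\phi(a)$. Since $B$ is an algebra over $\C$, the map $y \mapsto qy$ on $B$ is invertible (with inverse $y \mapsto (1/q)y$), so $x = (p/q)\phi(a) = k\,\phi(a)$, as desired.

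There is no genuine obstacle in this argument; the only point worth flagging is that we implicitly use that $B$ is a $\mathbb{Q}$-vector space (indeed a $\C$-vector space) in order to cancel the integer $q$. I would also remark that unitality of $\phi$ plays no role in this lemma — additivity alone suffices — but the hypothesis is kept for uniformity with the surrounding statements.
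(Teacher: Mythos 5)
Your proof is correct, but it takes a different route from the paper's. You use only additivity: after establishing $\phi(na) = n\phi(a)$ for $n \in \Z$, you set $x = \phi((p/q)a)$, compute $qx = \phi(pa) = p\phi(a)$, and cancel the integer $q$ using the fact that $B$ is a $\mathbb{Q}$-vector space. This is the standard argument that an additive map between $\mathbb{Q}$-vector spaces is automatically $\mathbb{Q}$-linear, and your observation that unitality and multiplicativity play no role is accurate. The paper instead leans on the ring structure: it writes $ka = (k1_A)a$, uses multiplicativity to get $\phi(ka) = \phi(k1_A)\phi(a)$, and evaluates $\phi((1/m)1_A)$ by noting that a unital ring homomorphism preserves inverses, so $\phi((1/m)1_A) = \phi((m1_A)^{-1}) = (m1_B)^{-1} = (1/m)1_B$. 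Your argument is more elementary and more general (it applies to any additive map into a $\mathbb{Q}$-vector space, with no multiplicative hypotheses); the paper's argument is in keeping with its theme of extracting analytic consequences from the ring-homomorphism axioms, but it genuinely needs the extra hypotheses you dispense with. Either proof suffices for the way the lemma is used later.
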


\begin{proof}
Since $\phi$ is additive, $\phi(na) = n \phi(a)$ for all $n \in \mathbb{Z}$.  In addition, since $\phi$ is a unital ring homomorphism, $\phi(a^{-1}) = \phi(a)^{-1}$ whenever $a \in A$ and  $a$ is invertible.  For any $m \in \mathbb{Z}$, we see that $m1_A$ is invertible with inverse $(1/m)1_A$, and $\phi((1/m)1_A) = \phi( (m1_A)^{-1}) = \phi(m1_A)^{-1} = (m \phi(1_A))^{-1} = (m1_B)^{-1} = (1/m) 1_B = (1/m) \phi(1_A)$.  Thus for any $k \in \mathbb{Q}$, we may write $k = n/m$ with $m,n \in \mathbb{Z}$, and then $\phi(ka) = \phi((n/m)1_A) \phi(a) = n\phi((1/m) 1_A) \phi(a) = n (1/m) \phi(1_A) \phi(a) = k \phi(a)$.
\end{proof}

\begin{theorem} \label{main-thm}
If $A$ and $B$ are unital $C^*$-algebras and $\phi : A \to B$ is a unital $*$-preserving ring homomorphism, then $\phi$ is contractive; i.e., $\| \phi (a) \| \leq \| a \|$ for all $a \in A$.  Consequently, $\phi$ is also continuous.
\end{theorem}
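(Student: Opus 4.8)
The plan is to transfer the matrix characterization of the norm in Lemma~\ref{norm-pos-lem} across $\phi$ by passing to $2\times 2$ matrices, exploiting the fact that a $*$-preserving ring homomorphism must preserve positive elements even without being linear. The crucial observation is that positivity has a purely algebraic description: $p \geq 0$ in a $C^*$-algebra if and only if $p = v^*v$ for some $v$. Hence if $\psi$ is \emph{any} $*$-preserving ring homomorphism and $p = v^*v$, then $\psi(p) = \psi(v^*v) = \psi(v^*)\psi(v) = \psi(v)^*\psi(v) \geq 0$. In particular the map $\phi_2 : M_2(A) \to M_2(B)$, which is again a $*$-preserving ring homomorphism, carries positive elements of $M_2(A)$ to positive elements of $M_2(B)$.

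With this in hand, fix $a \in A$ and let $r \in \mathbb{Q}$ with $r > \|a\|$. By Lemma~\ref{norm-pos-lem} the matrix $\left(\begin{smallmatrix} r 1_A & a \\ a^* & r 1_A \end{smallmatrix}\right)$ is positive in $M_2(A)$, so applying $\phi_2$ and using the previous paragraph gives
$$\phi_2 \begin{pmatrix} r 1_A & a \\ a^* & r 1_A \end{pmatrix} = \begin{pmatrix} \phi(r 1_A) & \phi(a) \\ \phi(a^*) & \phi(r 1_A) \end{pmatrix} \geq 0 \quad \text{in } M_2(B).$$
This is exactly the point where rationality of $r$ is needed: by Lemma~\ref{Q-lin-lem} we have $\phi(r 1_A) = r\,\phi(1_A) = r 1_B$, and since $\phi$ is $*$-preserving, $\phi(a^*) = \phi(a)^*$. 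Therefore $\left(\begin{smallmatrix} r 1_B & \phi(a) \\ \phi(a)^* & r 1_B \end{smallmatrix}\right) \geq 0$, and Lemma~\ref{norm-pos-lem} applied in $B$ yields $\|\phi(a)\| \leq r$. Letting $r$ decrease to $\|a\|$ through the rationals gives $\|\phi(a)\| \leq \|a\|$.

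For the continuity statement, note that additivity gives $\phi(a) - \phi(b) = \phi(a-b)$ for all $a,b \in A$, so the contractive estimate yields $\|\phi(a) - \phi(b)\| \leq \|a-b\|$; that is, $\phi$ is $1$-Lipschitz and in particular continuous.

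I do not anticipate a serious obstacle here. The only thing requiring care is that $\phi_2$ is not assumed linear, so one cannot simply invoke the standard fact that $*$-homomorphisms preserve positivity; instead one must argue directly from the $p = v^*v$ description. One must also retain the scalar $r$ as a rational number so that Lemma~\ref{Q-lin-lem} applies to compute $\phi(r 1_A)$, deferring the passage to the (possibly irrational) value $\|a\|$ to a limiting step at the very end.
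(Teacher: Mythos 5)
Your proposal is correct and follows essentially the same route as the paper's own proof: both pass to $\phi_2$ on $M_2$, use the $v^*v$ characterization of positivity to see that $\phi_2$ preserves positive elements, invoke Lemma~\ref{Q-lin-lem} to identify $\phi(r1_A)$ with $r1_B$ for rational $r$, and then apply Lemma~\ref{norm-pos-lem} in $B$ before taking a rational limit down to $\|a\|$. No gaps.
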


\begin{proof}
Suppose $\| a \| = r$.  Let $k \in \mathbb{Q}$ with $k \geq r$.  Then $\| a \| \leq k$, and Lemma~\ref{norm-pos-lem} implies that $\left( \begin{smallmatrix} k 1_A & a \\ a^* & k1_A \end{smallmatrix} \right) \geq 0$ and $\left( \begin{smallmatrix} k 1_A & a \\ a^* & k1_A \end{smallmatrix} \right) = V^* V $ for some $V \in M_2(A)$.  Since $\phi : A \to B$ is a unital $*$-preserving ring homomorphism, we see that $\phi_2 : M_2(A) \to M_2(B)$ is a unital $*$-preserving ring homomorphism.  Thus $\phi_2 \left( \begin{smallmatrix} k 1_A & a \\ a^* & k1_A \end{smallmatrix} \right) = \phi_2 (V^* V ) = \phi_2(V)^* \phi_2(V)$, so $\phi_2 \left( \begin{smallmatrix} k 1_A & a \\ a^* & k1_A \end{smallmatrix} \right) \geq 0$ in $M_2(B)$.  However, using the definition of $\phi_2$ and Lemma~\ref{Q-lin-lem} we have $$\phi_2 \left( \begin{smallmatrix} k 1_A & a \\ a^* & k1_A \end{smallmatrix} \right) =  \left( \begin{smallmatrix} \phi(k 1_A) & \phi(a) \\ \phi(a^*) & \phi(k1_A) \end{smallmatrix} \right) =  \left( \begin{smallmatrix} k\phi(1_A) & \phi(a) \\ \phi(a)^* & k\phi(1_A) \end{smallmatrix} \right) = \left( \begin{smallmatrix} k1_B & \phi(a) \\ \phi(a)^* & k 1_B \end{smallmatrix} \right)$$ so that $\left( \begin{smallmatrix} k1_B & \phi(a) \\ \phi(a)^* & k 1_B \end{smallmatrix} \right) \geq 0$ in $M_2(B)$.  It follows from Lemma~\ref{norm-pos-lem} that $\| \phi(a) \| \leq k$.  Hence we have shown that $\| \phi(a) \| \leq k$ for all $k \in \mathbb{Q}$ with $k \geq r$.  Thus $\| \phi(a) \| \leq r$.

The continuity of $\phi$ follows from the fact that any bounded map between metric spaces is continuous. 
\end{proof}

\begin{corollary} \label{phi-R-lin-cor}
If $A$ and $B$ are unital $C^*$-algebras and $\phi : A \to B$ is a unital $*$-preserving ring homomorphism, then $\phi$ is $\mathbb{R}$-linear; i.e., $\phi(ra) = r\phi(a)$ for all $r \in \mathbb{R}$ and $a \in A$.
\end{corollary}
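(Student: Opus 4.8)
The plan is to bootstrap from the rational-scalar identity in Lemma~\ref{Q-lin-lem} to arbitrary real scalars using the continuity of $\phi$ established in Theorem~\ref{main-thm}. Concretely, fix $r \in \mathbb{R}$ and $a \in A$, and choose a sequence $(k_n)$ in $\mathbb{Q}$ with $k_n \to r$. The map $t \mapsto ta$ from $\mathbb{R}$ (or $\mathbb{C}$) into $A$ is continuous, so $k_n a \to ra$ in $A$; since $\phi$ is continuous by Theorem~\ref{main-thm}, $\phi(k_n a) \to \phi(ra)$ in $B$. On the other hand, Lemma~\ref{Q-lin-lem} gives $\phi(k_n a) = k_n \phi(a)$ for every $n$, and $k_n \phi(a) \to r \phi(a)$ in $B$ because scalar multiplication is continuous. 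By uniqueness of limits in the Banach space $B$, we conclude $\phi(ra) = r\phi(a)$.

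I expect no real obstacle here: the substance of the corollary is entirely contained in Theorem~\ref{main-thm} (continuity) and Lemma~\ref{Q-lin-lem} (the rational case), and the argument is a routine density/continuity passage. The only point to be mildly careful about is that one is approximating a real scalar by rationals rather than a complex scalar by rationals — but that is exactly what is needed, since $\mathbb{Q}$ is dense in $\mathbb{R}$; $\mathbb{R}$-linearity, not $\mathbb{C}$-linearity, is all that is claimed (and indeed all that can be true in general, as the complex-conjugation example shows).

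It is worth noting in passing that combined with additivity, this shows $\phi$ restricted to any real subspace is a bounded real-linear map, which is the natural launching point for the subsequent decomposition result in Proposition~\ref{characterize-prop}: on the complexification, $\phi$ splits according to how it treats multiplication by $i$, and since $\phi(i1_A)^2 = \phi(-1_A) = -1_B$ with $\phi(i1_A)$ selfadjoint forces $\phi(i1_A)$ to be a selfadjoint unitary squaring to $-1_B$, one gets a central projection decomposition into a part where $\phi$ is $\mathbb{C}$-linear and a part where it is conjugate-linear.
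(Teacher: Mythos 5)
Your proof is correct and is essentially identical to the paper's: both approximate $r$ by rationals $k_n$, apply Lemma~\ref{Q-lin-lem} to get $\phi(k_n a) = k_n\phi(a)$, and pass to the limit using the continuity of $\phi$ from Theorem~\ref{main-thm}. No issues.
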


\begin{proof}
Let $a \in A$ and $r \in \mathbb{R}$.  Choose a sequence $\{k_n \}_{n=1}^\infty \subseteq \mathbb{Q}$ with $\lim k_n = r$.  Then $\lim k_n a = ra$, and using Lemma~\ref{Q-lin-lem} and Theorem~\ref{main-thm} we have that $ \phi (ra) = \phi( \lim k_n a) = \lim \phi( k_n a) = \lim k_n \phi(a) = r \phi(a)$.
\end{proof}

\begin{corollary}
Let $A$ and $B$ be unital $C^*$-algebras and let $\phi : A \to B$ be a unital $*$-preserving ring homomorphism.  Then $\phi$ is injective if and only if $\phi$ is isometric (i.e., $\| \phi (a) \| = \| a \|$ for all $a \in A$).  
\end{corollary}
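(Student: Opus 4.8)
The plan is to prove the two implications separately. The implication ``$\phi$ isometric $\Rightarrow$ $\phi$ injective'' is immediate: if $\|\phi(a)\| = \|a\|$ for all $a$ and $\phi(a) = 0$, then $\|a\| = 0$, so $a = 0$. The substance lies in the converse, where the starting point is Theorem~\ref{main-thm}: it already supplies $\|\phi(a)\| \leq \|a\|$ for all $a \in A$, so assuming $\phi$ is injective I only need the reverse inequality.

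First I would reduce to positive elements. Since $\phi$ is multiplicative and $*$-preserving, the $C^*$-identity gives $\|\phi(a)\|^2 = \|\phi(a)^*\phi(a)\| = \|\phi(a^*a)\|$ while $\|a\|^2 = \|a^*a\|$, so it suffices to show $\|\phi(b)\| = \|b\|$ for every positive $b \in A$ (note that $\phi(b) = \phi(a)^*\phi(a)$ is positive as well). Next I would record the key technical fact: for a selfadjoint $c \in A$, the map $\phi$ intertwines the continuous functional calculus, meaning $\phi(g(c)) = g(\phi(c))$ for every real-valued continuous function $g$ on an interval $[-\|c\|,\|c\|]$, which contains both $\sigma(c)$ and $\sigma(\phi(c))$ because $\phi(c)$ is selfadjoint with $\|\phi(c)\| \leq \|c\|$. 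For real polynomials this is immediate from Lemma~\ref{Q-lin-lem}, Corollary~\ref{phi-R-lin-cor} (so $\phi$ is $\mathbb{R}$-linear), and the multiplicativity and unitality of $\phi$; the general case follows by approximating $g$ uniformly by real polynomials via Stone--Weierstrass and using that $\phi$ is continuous (Theorem~\ref{main-thm}) together with the continuity of the functional calculus.

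With these in hand, suppose toward a contradiction that $b \in A$ is positive with $\|\phi(b)\| < \|b\| =: \beta$; in particular $\beta > 0$, and since $b$ is positive, $\beta = \max \sigma(b) \in \sigma(b)$, whereas $\sigma(\phi(b)) \subseteq [0, \|\phi(b)\|]$. Choose a continuous function $f \colon [0,\beta] \to [0,\infty)$ with $f \equiv 0$ on $[0, \|\phi(b)\|]$ and $f(\beta) = 1$. Then $\|f(b)\| = \sup_{t \in \sigma(b)} |f(t)| \geq f(\beta) = 1$, so $f(b) \neq 0$; but $f$ vanishes on $\sigma(\phi(b))$, so $f(\phi(b)) = 0$, and hence $\phi(f(b)) = f(\phi(b)) = 0$. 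This contradicts the injectivity of $\phi$. Therefore $\|\phi(b)\| = \|b\|$ for all positive $b$, and by the reduction in the second paragraph $\phi$ is isometric.

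The main obstacle is the intertwining of the continuous functional calculus: because $\phi$ is not assumed linear, one cannot invoke it directly as one would for a $*$-homomorphism. The resolution is that $\mathbb{R}$-linearity (Corollary~\ref{phi-R-lin-cor}) together with multiplicativity already forces $\phi$ to commute with real polynomials in a selfadjoint element, and contractivity/continuity (Theorem~\ref{main-thm}) upgrades this to all real-valued continuous functions --- which is exactly what is needed, since the separating function $f$ above may be taken real-valued.
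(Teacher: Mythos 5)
Your argument is correct, but it takes a genuinely different route from the paper's. For the nontrivial direction the paper observes that $a \mapsto \|\phi(a)\|$ defines a second $C^*$-norm $\|\cdot\|_0$ on $A$: subadditivity is immediate from additivity of $\phi$, absolute homogeneity follows from Corollary~\ref{phi-R-lin-cor} combined with the $C^*$-identity (writing $\|\lambda a\|_0^2 = \|\phi(|\lambda|^2 a^*a)\|$), definiteness is exactly the injectivity hypothesis, and the $C^*$-identity for $\|\cdot\|_0$ follows from multiplicativity and $*$-preservation; the conclusion is then immediate from the uniqueness of the $C^*$-norm on a $C^*$-algebra \cite[Corollary~2.1.2]{Mur}. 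You instead give the classical spectral proof that an injective $*$-homomorphism is isometric, adapted to the non-linear setting: reduce to positive elements via the $C^*$-identity, show that $\phi$ intertwines the continuous functional calculus on selfadjoint elements (for real polynomials this uses Lemma~\ref{Q-lin-lem}, Corollary~\ref{phi-R-lin-cor}, multiplicativity, and unitality; for general real-valued continuous functions one approximates uniformly and uses the continuity supplied by Theorem~\ref{main-thm}), and then manufacture a nonzero element of $\ker\phi$ from a continuous function that vanishes on $\sigma(\phi(b))$ but not at $\|b\| \in \sigma(b)$. Both proofs rely essentially on Theorem~\ref{main-thm} and Corollary~\ref{phi-R-lin-cor}. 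The paper's version is shorter because it delegates all the spectral work to the cited uniqueness theorem; yours is longer but more self-contained, and the intertwining identity $\phi(g(c)) = g(\phi(c))$ for real-valued continuous $g$ and selfadjoint $c$ is a byproduct of independent interest (it immediately recovers, for example, that $\phi$ preserves positivity and order). The only cosmetic point is that your separating function $f$ need only be defined on a compact set containing $\sigma(b) \cup \sigma(\phi(b))$, and $[0,\beta]$ suffices precisely because both $b$ and $\phi(b)$ are positive, as you note.
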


\begin{proof}
If $\phi$ is isometric, then it is easy to see that $\phi$ must be injective, since $\phi(a) = 0$ implies $\| a \| = \| \phi(a) \| = 0$ and hence $a = 0$.  Conversely, suppose that $\phi$ is injective.  Define $\| \cdot \|_0 : A \to [0, \infty)$ by $\| a \|_0 := \| \phi (a) \|$.  We see that $\| \cdot \|_0$ is a seminorm because $\| a + b \|_0 := \| \phi(a+b) \| = \| \phi(a) + \phi(b) \| \leq \| \phi(a) \| + \| \phi(b) \| = \| a \|_0 + \| b \|_0$, and using Corollary~\ref{phi-R-lin-cor}, for any $a \in A$ and $\lambda \in \C$, we have 
\begin{align*}
\| \lambda a \|^2_0 &= \| \phi(\lambda a) \|^2 = \| \phi(\lambda a)^* \phi(\lambda a) \| = \| \phi (\overline{\lambda} a^*) \phi(\lambda a) \| = \| \phi (| \lambda|^2 a^* a) \| \\
&= \| | \lambda|^2  \phi (a^* a) \| = | \lambda |^2 \| \phi(a)^* \phi(a) \| = | \lambda |^2 \| \phi(a) \|^2 = | \lambda |^2 \| a \|_0^2
\end{align*}
so that $\| \lambda a \|_0 = | \lambda | \| a \|_0$.  In addition, $\| \cdot \|_0$ is a norm since whenever $\| a \|_0 = 0$ we have $\| \phi(a) \| = 0$, and $\phi(a) = 0$, and the injectivity of $\phi$ implies that $a = 0$.  Finally, $\| \cdot \|_0$ is a $C^*$-norm on $A$ because $\| a \|_0^2 = \| \phi(a) \|^2 = \| \phi(a)^* \phi(a) \| = \| \phi(a^*a) \| = \| a^*a \|_0$.  Since $A$ is a $C^*$-algebra, there is a unique $C^*$-norm on $A$ (see, for example, \cite[Corollary~2.1.2]{Mur}), and hence $\| \cdot \|_0 = \| \cdot \|$.  It follows that $\| \phi(a) \| = \| a \|$ for all $a \in A$.
\end{proof}

The following proposition shows that any unital $*$-preserving ring homomorphism between $C^*$-algebras decomposes as a direct sum of a linear $*$-preserving ring homomorphism and a conjugate-linear $*$-preserving ring homomorphism.  

\begin{proposition} \label{characterize-prop}
If $A$ and $B$ are unital $C^*$-algebras and $\phi : A \to B$ is a unital $*$-preserving ring homomorphism, then there exist projections $P, Q \in B$ such that $P$ and $Q$ commute with $\im \phi$, the projections satisfy $P + Q = 1_B$, the map $\phi_1 := P\phi$ is a linear $*$-preserving ring homomorphism from $A$ to $B$, and the map $\phi_2 := Q \phi$ is a conjugate-linear $*$-preserving ring homomorphism from $A$ to $B$.  Thus $$\phi = \phi_1 + \phi_2$$ and $\phi$ is the direct sum of a linear $*$-preserving ring homomorphism and a conjugate-linear $*$-preserving ring homomorphism.  

Moreover, if the $C^*$-subalgebra of $B$ generated by $\im \phi$ has trivial center (i.e., the center is $\{ \lambda 1_B : \lambda \in \C \}$), then either $\phi_1 = 0$ or $\phi_2 = 0$, and $\phi$ is either linear or conjugate linear.
\end{proposition}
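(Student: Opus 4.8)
The plan is to locate the entire obstruction to linearity in the single element $u := \phi(i1_A) \in B$ and to take $P$ and $Q$ to be spectral projections manufactured from it.

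First I would record the structural properties of $u$. Since $\phi$ is additive and $*$-preserving, $u^* = \phi\big((i1_A)^*\big) = \phi(-i1_A) = -u$, and since $\phi$ is unital and multiplicative, $u^2 = \phi\big((i1_A)^2\big) = \phi(-1_A) = -1_B$; thus $u$ is a skew-adjoint unitary with $u^2 = -1_B$. Because $i1_A$ is central in $A$, for every $a \in A$ we have $u\,\phi(a) = \phi\big((i1_A)a\big) = \phi\big(a(i1_A)\big) = \phi(a)\,u$, so $u$ commutes with $\im\phi$; since $\im\phi$ is selfadjoint ($\phi(a)^* = \phi(a^*)$), commutants are norm closed, and multiplication is jointly continuous on bounded sets, $u$ then commutes with the whole $C^*$-subalgebra $C \subseteq B$ generated by $\im\phi$ (note $1_B = \phi(1_A) \in C$). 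Now set $s := -iu$, so that $s^* = s$, $s^2 = -u^2 = 1_B$, $s \in C$, and $s$ commutes with $C$; define $P := \tfrac12(1_B + s)$ and $Q := \tfrac12(1_B - s)$.

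Next I would verify the assertions about $P, Q, \phi_1, \phi_2$. From $s^* = s$ and $s^2 = 1_B$ one obtains the standard identities: $P$ and $Q$ are projections, $P + Q = 1_B$, $PQ = QP = 0$; and $P, Q$ commute with $\im\phi$ because $s$ does. For $\phi_1 := P\phi$, additivity and $*$-invariance are inherited from $\phi$ together with $P^* = P$ commuting with $\im\phi$, and multiplicativity follows from $P^2 = P$ commuting with $\im\phi$ via $\phi_1(ab) = P\phi(a)\phi(b) = \big(P\phi(a)\big)\big(P\phi(b)\big) = \phi_1(a)\phi_1(b)$; the same computations apply to $\phi_2 := Q\phi$. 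For the scalars, the key identity is $\phi(ia) = \phi\big((i1_A)a\big) = u\,\phi(a)$; combined with $Ps = P$ and $Qs = -Q$ (equivalently $Pu = iP$ and $Qu = -iQ$, using $u = is$), this gives $\phi_1(ia) = i\phi_1(a)$ and $\phi_2(ia) = -i\phi_2(a)$. Feeding these into the $\mathbb{R}$-linearity of $\phi$ furnished by Corollary~\ref{phi-R-lin-cor}, together with additivity, yields $\phi_1(\lambda a) = \lambda\phi_1(a)$ and $\phi_2(\lambda a) = \overline{\lambda}\,\phi_2(a)$ for every $\lambda \in \mathbb{C}$. Finally $\phi_1 + \phi_2 = (P + Q)\phi = \phi$, and since $P$ commutes with $\im\phi$ we have $\phi_1(a) = P\phi(a)P \in PBP$ and likewise $\phi_2(a) \in QBQ$ with $P + Q = 1_B$, so this is an honest direct sum decomposition.

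For the final clause, suppose the center of $C$ is $\{\lambda 1_B : \lambda \in \mathbb{C}\}$. Since $s \in C$ commutes with all of $C$, we get $s = \lambda 1_B$ for some $\lambda \in \mathbb{C}$, and $s = s^*$, $s^2 = 1_B$ force $\lambda \in \{1, -1\}$: if $\lambda = 1$ then $P = 1_B$, $Q = 0$, so $\phi_2 = 0$ and $\phi = \phi_1$ is linear, while if $\lambda = -1$ then $P = 0$, $Q = 1_B$, so $\phi_1 = 0$ and $\phi = \phi_2$ is conjugate linear. I do not anticipate a serious obstacle here: the whole argument is powered by the single observation that $\phi(i1_A)$ is forced to be a skew-adjoint unitary square root of $-1_B$ commuting with $\im\phi$, interpolating between $i1_B$ (the linear case) and $-i1_B$ (the conjugate-linear case). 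The two places requiring mild care are (i) promoting ``$s$ commutes with $\im\phi$'' to ``$s$ commutes with the generated $C^*$-algebra $C$'', which is used both for the commutation statements about $P, Q$ and for the center argument, and (ii) choosing the sign $s := -i\,\phi(i1_A)$ rather than $+i\,\phi(i1_A)$ so that it is $\phi_1 = P\phi$, and not $Q\phi$, that comes out linear.
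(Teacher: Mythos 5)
Your proposal is correct and is essentially the paper's own argument: the paper sets $T := -i\phi(i1_A)$ (your $s$), notes it is a selfadjoint central symmetry with $T^2 = 1_B$, forms $P = \tfrac12(T+1_B)$ and $Q = 1_B - P$, and deduces linearity/conjugate-linearity of $P\phi$ and $Q\phi$ from $\mathbb{R}$-linearity exactly as you do. The only cosmetic difference is that you spell out why $\phi(i1_A)$ commutes with the $C^*$-algebra generated by $\im\phi$, a step the paper handles by replacing $B$ with that subalgebra at the outset.
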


\begin{proof}
Without loss of generality, we may assume that $\im \phi$ generates $B$.  (If not, simply replace $B$ by the $C^*$-subalgebra of $B$ generated by $\im \phi$.)  Let $T := -i \phi(i1_A)$.  Since $\im \phi$ generates $B$, we see that $T$ is in the center of $B$.  Using the properties of $\phi$ we have that $T^2 = \phi(1_A) = 1_B$ and also $T^* = T$.  Let $P := \frac{1}{2} (T + 1_B)$, and $Q := 1_B - P$.  Then $T = P-Q$, and $P$ and $Q$ are orthogonal projections that are in the center of $B$.  Define $\phi_1 : A \to B$ by $\phi_1(a) := P \phi(a)$, and define $\phi_2 : A \to B$ by $\phi_2(a) := Q \phi(a)$.  Then $\phi_1$ and $\phi_2$ are $*$-preserving ring homomorphisms with mutually orthogonal ranges, and since $P + Q =1_B$ it follows that $\phi = \phi_1 + \phi_2$.  In addition, $\phi_1(i1_A) = P \phi(i1_A) = P i T = P i (P-Q) = iP = i \phi_1(1_A)$, and similarly $\phi_2(i1_A) = Q \phi(i1_A) = QiT = Qi(P-Q) = -iQ = -i \phi_2(1_A)$.  Since $\phi_1$ and $\phi_2$ are real linear by Corollary~\ref{phi-R-lin-cor}, it follows that $\phi_1$ is complex linear and $\phi_2$ is conjugate linear.  Finally, we see that if the $C^*$-subalgebra of $B$ generated by $\im \phi$ has trivial center, then because $P$ and $Q$ are orthogonal projection in this center, it must be the case that either $P=0$ or $Q=0$, and hence either $\phi_1 = 0$ or $\phi_2=0$.
\end{proof}

\begin{corollary}
If $A$ is a unital $C^*$-algebra and $\phi : A \to B(\Hi)$ (respectively, $\phi : A \to M_n(\C)$) is a unital $*$-preserving ring homomorphism that is surjective, then $\phi$ is either linear or conjugate-linear.
\end{corollary}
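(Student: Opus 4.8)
The plan is to simply invoke the ``moreover'' clause of Proposition~\ref{characterize-prop}. The observation is that for $B = B(\Hi)$ or $B = M_n(\C)$, surjectivity of $\phi$ forces $\im \phi = B$, and hence the $C^*$-subalgebra of $B$ generated by $\im \phi$ is all of $B$. So the only real content is that these particular $C^*$-algebras have trivial center, after which the conclusion is immediate.

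First I would recall that $B(\Hi)$ has center $\{ \lambda 1 : \lambda \in \C \}$: if $T \in B(\Hi)$ commutes with every bounded operator, then in particular it commutes with every rank-one projection $x \mapsto \langle x, \xi \rangle \xi$ for a unit vector $\xi$, which forces every vector to be an eigenvector of $T$, and a standard argument then shows $T$ is a scalar multiple of the identity. The same reasoning (or just the $n = \dim$ case of it) shows $M_n(\C)$ has center equal to the scalar matrices. Thus in either case the center of the $C^*$-subalgebra generated by $\im \phi$ is $\{ \lambda 1_B : \lambda \in \C \}$.

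Next I would apply Proposition~\ref{characterize-prop}: since $\phi : A \to B$ is a unital $*$-preserving ring homomorphism between unital $C^*$-algebras, and the $C^*$-subalgebra of $B$ generated by $\im \phi$ has trivial center, the proposition yields $\phi_1 = 0$ or $\phi_2 = 0$, i.e.\ $\phi$ is either linear or conjugate-linear. This completes the argument.

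There is essentially no obstacle here; the one point requiring a moment's care is to note explicitly that surjectivity is exactly what is needed to guarantee that $\im \phi$ generates all of $B$ (so that the relevant center is the center of $B(\Hi)$ or $M_n(\C)$ rather than of some proper subalgebra, whose center could a priori be larger). Everything else is a direct citation of the preceding proposition together with the classical fact that $B(\Hi)$ and $M_n(\C)$ are factors.
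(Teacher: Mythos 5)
Your proposal is correct and is exactly the argument the paper intends: the corollary is stated without proof precisely because it is an immediate application of the ``moreover'' clause of Proposition~\ref{characterize-prop}, using surjectivity to see that $\im \phi$ generates all of $B(\Hi)$ (or $M_n(\C)$) and the standard fact that these algebras have trivial center.
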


\begin{corollary}
If $\phi : \C \to \C$ is a unital $*$-preserving ring homomorphism, then either $\phi$ is equal to the identity map (i.e., $\phi(z) = z$ for all $z \in \C$) or $\phi$ is equal to the complex conjugation map (i.e., $\phi(z) = \overline{z}$ for all $z \in \C$). 
\end{corollary}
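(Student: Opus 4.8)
The plan is to reduce to the dichotomy already established in Proposition~\ref{characterize-prop}. Taking $B = \C$, I would first observe that the $C^*$-subalgebra of $\C$ generated by $\im \phi$ is all of $\C$, since $\phi$ is unital and hence $1 = \phi(1_\C) \in \im \phi$. Moreover, the center of $\C$ is $\C$ itself, which is exactly $\{ \lambda 1_\C : \lambda \in \C \}$; that is, $\C$ has trivial center in the sense of the proposition. Thus Proposition~\ref{characterize-prop} applies and yields that $\phi$ is either linear or conjugate-linear, and the proof then splits into these two cases.

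In the linear case, for any $z \in \C$ I would use linearity and unitality to compute $\phi(z) = \phi(z \cdot 1_\C) = z\, \phi(1_\C) = z$, so that $\phi$ is the identity map. In the conjugate-linear case, I would instead compute $\phi(z) = \phi(z \cdot 1_\C) = \overline{z}\, \phi(1_\C) = \overline{z}$, so that $\phi$ is the complex conjugation map. These two cases are mutually exclusive (except for $\phi$ restricted to $\R$) and exhaustive, which finishes the argument.

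Alternatively, I could give a direct argument avoiding the full strength of Proposition~\ref{characterize-prop}: by Corollary~\ref{phi-R-lin-cor}, $\phi$ is $\R$-linear, and $\phi(1_\C) = 1_\C$ together with $\R$-linearity forces $\phi(x) = x$ for every $x \in \R$. Since $\phi$ is multiplicative, $\phi(i)^2 = \phi(i^2) = \phi(-1) = -1$, so $\phi(i) \in \{ i, -i \}$. Writing an arbitrary element of $\C$ as $a + bi$ with $a, b \in \R$ and invoking $\R$-linearity again, $\phi(a + bi) = a + b\, \phi(i)$, which equals $a + bi$ when $\phi(i) = i$ and equals $a - bi = \overline{a + bi}$ when $\phi(i) = -i$.

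I do not expect any genuine obstacle here; the only point requiring a moment's attention is verifying that the hypotheses feeding into this corollary are met for $A = B = \C$ — namely that $\C$ is a unital $C^*$-algebra with trivial center and that $\phi$ is unital so that $\im \phi$ generates $\C$ — all of which are immediate.
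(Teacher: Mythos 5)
Your proposal is correct and its main argument---invoking Proposition~\ref{characterize-prop} with the observation that $\C$ has trivial center and that a unital (conjugate-)linear multiplicative map on $\C$ must be the identity (respectively, conjugation)---is exactly the route the paper intends, since the corollary is stated there as an immediate consequence of that proposition. Your alternative direct argument via Corollary~\ref{phi-R-lin-cor} and $\phi(i)^2 = -1$ is also valid, but it is not a genuinely different method in substance, as it rests on the same $\R$-linearity machinery.
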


\begin{remark}
Readers familiar with the theory of $C^*$-algebras will note that Proposition~\ref{characterize-prop} is reminiscent of a famous theorem of Kadison: any Jordan $*$-isomorphism between von Neumann algebras is the direct sum of a $*$-isomorphism and a $*$-anti-isomorphism \cite[\S10.5.26]{KadRing}.  Note, however, that Proposition~\ref{characterize-prop} is different from Kadison's result.  Specifically, the Jordan $*$-isomorphism, $*$-isomorphism, and $*$-anti-isomorphism of Kadison's result are all assumed to be linear.  In addition, Proposition~\ref{characterize-prop} does not require the homomorphism to be an isomorphism or the $C^*$-algebra to be a von Neumann algebra.
\end{remark}

\begin{remark}
If $A$ and $B$ are unital $C^*$-algebras, then the kernel of a unital $*$-preserving ring homomorphism $\phi : A \to B$ is always a $C^*$-algebra ideal of $A$:  To see that $\ker \phi$ is closed under scalar multiplication note that if $a \in \ker \phi$ and $\lambda \in \C$, then $\phi(\lambda a) = \phi (\lambda 1_A a) = \phi( \lambda 1_A) \phi (a) = \phi( \lambda 1_A) \cdot 0 = 0$, so $\lambda a \in \ker \phi$.  Thus the quotient $A /  \ker \phi$ is a $C^*$-algebra.  However, the image of $\phi$ is not necessarily a $C^*$-algebra.  For example, the map $\phi : \C \to M_2(\C)$ given by $\phi(z) = \left( \begin{smallmatrix} z & 0 \\ 0 & \overline{z} \end{smallmatrix} \right)$ has an image that is not closed under scalar multiplication, and thus not a $C^*$-algebra.
\end{remark}

\begin{remark}
Suppose $\phi : A \to B$ is a $*$-preserving ring homomorphism between $C^*$-algebras that is not necessarily unital.   If $A$ is a unital $C^*$-algebra, and if we let $B_0$ be the $C^*$-subalgebra of $B$ generated by $\im \phi$, then the map $\phi : A \to B_0$ obtained by restricting the domain is a unital $*$-preserving ring homomorphism between $C^*$-algebras.  Thus, while all results in this paper are stated for unital $*$-preserving ring homomorphisms between $C^*$-algebras, it is possible (by restricting the codomain) to apply the results whenever one has a $*$-preserving ring homomorphism from a unital $C^*$-algebra into another $C^*$-algebra.
\end{remark}

\noindent \textbf{Acknowledgements:}  The author thanks the referee for a careful reading and insightful comments.  The author also thanks Vern I.~Paulsen for a conversation leading to the proof of Proposition~\ref{characterize-prop}.  Furthermore, the author is grateful to the referee who provided an improvement to Proposition~3.9 and also pointed out that there is an alternate way to prove Theorem~\ref{main-thm}:  Since $\phi$ is a $*$-preserving ring homomorphism, $\phi (a^* a) = \phi (a)^* \phi (a)$, so $\phi$ preserves positivity and hence preserves order. Fix $\lambda \in \R$.  Using Lemma~\ref{Q-lin-lem}, for every $r, s \in \mathbb{Q}$ with $r \leq \lambda \leq s$, we have $r1_B = \phi (r1_A ) \leq \phi(\lambda 1_A) \leq \phi(s1_A) = s 1_B$ and thus for all $a \in A$ we have $\phi (\lambda a) = \lambda \phi (a)$. Hence $\phi$ is $\R$-linear. Fix $a \in A$.  For any selfadjoint element $x$ of $A$ we have $\| x \| = \min \{ \lambda \in \R : -\lambda 1_A \leq x \leq \lambda 1_A \}$.  Hence for $\lambda := \| a^* a \| = \|a \|^2$ we have $-\lambda 1_A \leq a^*a \leq \lambda 1_A$, and because $\phi$ preserves order, $-\lambda 1_B \leq \phi (a^*a) \leq \lambda 1_B$.  Thus $\| \phi(a) \|^2 = \| \phi(a)^* \phi(a) \| = \| \phi(a^*a) \| \leq \| a^* a \| = \|a \|^2$ and $\| \phi(a) \| \leq \| a \|$.

\end{document}